\documentclass{article}
\usepackage{amsmath,amsfonts,amsthm}
\author{Jean Michel}
\date{8th July 2014}
\title{``Case-free'' derivation for Weyl groups of the number of
 reflection factorizations of a Coxeter element}
\newcommand\bB{{\mathbf B}}
\newcommand\bG{{\mathbf G}}
\newcommand\bT{{\mathbf T}}
\newcommand\bX{{\mathbf X}}
\newcommand\cE{{\mathcal E}}
\newcommand\cF{{\mathcal F}}
\newcommand\cH{{\mathcal H}}
\newcommand\cR{{\mathcal R}}
\newcommand\BF{{\mathbb F}}
\newcommand\BC{{\mathbb C}}
\newcommand\Fq{{\BF_q}}
\newcommand\GF{{\bG^F}}
\newcommand\Irr{{\hbox{Irr}}}
\newcommand\Fqbar{{\overline{\mathbb F}_q}}
\newcommand\Qlbar{{\overline{\mathbb Q}_\ell}}
\newcommand\inv{^{-1}}
\DeclareMathOperator\Fam{{\Xi}}
\DeclareMathOperator\GL{\text{GL}}
\DeclareMathOperator\Id{\text{Id}}
\DeclareMathOperator\Ind{\text{Ind}}
\DeclareMathOperator\Ref{\text{ref}}
\DeclareMathOperator\Trace{\text{Trace}}
\newcommand{\scal}[3]{{\langle\,#1,#2\,\rangle_{#3}}}
\newtheorem{lemma}[equation]{Lemma}
\theoremstyle{remark}
\newtheorem{remark}[equation]{Remark}

\begin{document}
\maketitle
Let   $W\subset\GL(\BC^n)$   be   an   irreducible  well-generated  complex
reflection  group, let $\cR$ be the set of its reflections, $\cR^*$ the set
of its reflecting hyperplanes, and let $c$ be a Coxeter element of $W$
(see \cite[Remark 1.3]{CT}).

In  \cite[Theorem 1.1]{CT}, Chapuy and Stump  obtain a very nice generating
series   for   the   number  $N_l:=|\{r_1,\ldots,r_l\in\cR^l\mid  r_1\ldots
r_l=c\}|$  of factorizations  of $c$  into the  product of  $l$ elements of
$\cR$.   Their   formula   is   $$\sum_{l\ge  0}  \frac{t^l}{l!}N_l=  \frac
1{|W|}(e^{t|\cR|/n}-e^{-t|\cR^*|/n})^n.$$

Their  method is to obtain a character-theoretic expression for $N_l$, that
they  proceed to evaluate case by case. My observation is that, in the case
of Weyl group, a uniform evaluation of their character-theoretic expression
can  be done using  properties of Deligne-Lusztig  representations. I thank
Christian  Stump for  making me  aware of  the problem,  and for  a careful
reading of this text.

We start with \cite[formula above (10)]{CT} which states that if $S$ is the
element  of  the  group  algebra  $\BC  W$ given by $\sum_{r\in\cR}r$, then
$|W|N_l=\sum_{\chi\in\Irr(W)}\chi(1)\chi(S^lc\inv)$. We observe that $S$ is
in the center of $\BC W$, thus the formula can also be written
$$|W|N_l=\sum_{\chi\in\Irr(W)}\chi(S^l)\chi(c\inv) \eqno(1).$$

For $\chi\in\Irr(W)$ write the fake degree of $\chi$ as
$x^{e_1}+\ldots+x^{e_{\chi(1)}}$  and define
$N(\chi^*)=\sum_{i=1}^{\chi(1)}  e_i$, where  $\chi^*$ denotes  the complex
conjugate (see for example \cite[\S 4.B]{BM}). 
We will need the following property:

\begin{lemma} \label{chi(S)} Assume that $|C_W(H)|$ has a common value $e$
for all $H\in\cR^*$. Then
$\chi(S)=|\cR|\chi(1)-N(\chi)-N(\chi^*)$.
\end{lemma}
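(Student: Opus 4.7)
The approach will be to pass through Solomon's generating function for the graded character of the coinvariant algebra. Let $V=\BC^n$, let $d_1,\ldots,d_n$ be the degrees of $W$, and let $R=\BC[V]/\BC[V]^W_+=\bigoplus_k R_k$ be the coinvariant algebra. Solomon's formula asserts that
$$T_w(x):=\sum_k\Trace(w\mid R_k)\,x^k=\frac{\prod_i(1-x^{d_i})}{\det_V(1-xw)}.$$
Writing $f_\chi(x)=\sum_k\langle R_k,\chi^*\rangle\,x^k=|W|\inv\sum_w\chi(w)T_w(x)$, differentiation at $x=1$ yields $N(\chi^*)=|W|\inv\sum_w\chi(w)T_w'(1)$, and applying the same identity to $\chi^*$ gives $N(\chi)=|W|\inv\sum_w\chi(w\inv)T_w'(1)$; hence $N(\chi)+N(\chi^*)=|W|\inv\sum_w(\chi(w)+\chi(w\inv))T_w'(1)$.

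I would then identify which $w$ contribute. The numerator of $T_w(x)$ has a zero of order $n$ at $x=1$ and the denominator one of order $\dim V^w$, so $T_w'(1)=0$ unless $w=1$ or $w$ is a reflection. A direct calculation using $\sum_i(d_i-1)=|\cR|$ gives $T_1'(1)=|W|\,|\cR|/2$; for $w$ a reflection with non-trivial eigenvalue $\zeta_w$, factoring the single vanishing $(1-x)$ out of the numerator produces $T_w'(1)=-|W|/(1-\zeta_w)$.

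Substituting these values gives
$$N(\chi)+N(\chi^*)=|\cR|\chi(1)-\sum_{w\in\cR}\frac{\chi(w)+\chi(w\inv)}{1-\zeta_w}.$$
The reflection sum collapses via the elementary identity $\frac1{1-\zeta}+\frac1{1-\zeta\inv}=1$ valid for any root of unity $\zeta\ne1$: pairing each $w\in\cR$ with $w\inv$ (a reflection in the same hyperplane, whose non-trivial eigenvalue is $\zeta_w\inv$) reduces the sum to $\sum_{w\in\cR}\chi(w)=\chi(S)$. Rearranging yields the asserted formula.

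I do not foresee a serious obstacle; the argument is essentially formal once Solomon's formula is in hand. Somewhat curiously, the common-value hypothesis $|C_W(H)|=e$ is not really needed in this route. It would, however, genuinely simplify an alternative, more hands-on proof that decomposes $S=\sum_H\sum_{k=1}^{e-1}s_H^k$ hyperplane by hyperplane and diagonalizes each $s_H$, where the bookkeeping required to re-express $N(\chi)+N(\chi^*)$ as a weighted sum of eigenspace dimensions would be the main technical nuisance.
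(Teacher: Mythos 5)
Your proof is correct, and it takes a genuinely different route from the paper's. The paper decomposes $S=\sum_{H\in\cR^*}\sum_{j=1}^{e-1}s_H^j$, computes $\chi(\sum_j s_H^j)$ in terms of the eigenvalue multiplicities $m_{H,i}(\chi)$ of $s_H$, and then quotes \cite[Corollaire 4.2]{BM} to convert $\sum_H m_{H,i}(\chi)$ into $N(\chi)+N(\chi^*)$ --- that citation is the only place the constancy of $e=|C_W(H)|$ is used, and it is used only because the corollary is quoted in that special form. You instead re-derive the needed input from scratch via the Molien--Solomon series $T_w(x)=\prod_i(1-x^{d_i})/\det(1-xw)$ for the coinvariant algebra: the order-of-vanishing argument at $x=1$ (only $w=1$ and reflections survive differentiation), the values $T_1'(1)=|W|\,|\cR|/2$ and $T_w'(1)=-|W|/(1-\zeta_w)$, and the collapse via $\frac1{1-\zeta}+\frac1{1-\zeta\inv}=1$ after pairing $w$ with $w\inv$ are all correct; the only point needing care is whether Solomon's formula involves $\det_V$ or $\det_{V^*}$, which changes $\zeta_w$ to $\zeta_w\inv$ but washes out in the symmetric combination $N(\chi)+N(\chi^*)$. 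What your approach buys is self-containedness and, as you observe, the stronger unconditional statement (equivalent to the general identity $N(\chi)+N(\chi^*)=\sum_{H}e_H\sum_{i=1}^{e_H-1}m_{H,i}(\chi)$, which your computation effectively reproves); this complements the paper's Remark, which reaches an unconditional formula by a different route through Opdam's involution. What the paper's approach buys is brevity, by outsourcing exactly the analytic content of your argument to the cited corollary. Amusingly, the ``alternative, more hands-on proof'' you sketch in your last sentence is essentially the proof the paper actually gives.
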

\begin{proof}
For $H\in\cR^*$, whose fixator $C_W(H)$ is generated
by a reflection  $s_H$ of hyperplane $H$ and non-trivial eigenvalue
$\zeta=\exp(2i\pi/e)$, let $m_{H,i}(\chi)$ be the multiplicity of the eigenvalue
$\zeta^i$ of $s_H$ in the representation affording $\chi$. Then we get
$$\chi(\sum_{j=1}^{e-1}s_H^j)=
\sum_{i=0}^{e-1}m_{H,i}(\chi)\sum_{j=1}^{e-1}\zeta^{ij}
=(e-1)m_{H,0}(\chi)-\sum_{i=1}^{e-1}m_{H,i}(\chi).$$
Now the formulae in \cite[Corollaire 4.2]{BM} read, in our case where
$|C_W(H)|$ has a common value $e$:
$$\begin{aligned}
\sum_{H\in\cR^*}\sum_{i=1}^{e-1}m_{H,i}(\chi)&=\frac{N(\chi)+N(\chi^*)}e\\
\sum_{H\in\cR^*}m_{H,0}(\chi)&=\frac{|\cR|\chi(1)}{e-1}
-\frac{N(\chi)+N(\chi^*)}e
\end{aligned}$$
whence the Lemma.
\end{proof}
\begin{remark}
Another expression valid without assuming $|C_W(H)|$ constant is 
$\chi(S)=|\cR|\chi(1)-N(\chi)-N(\iota(\chi))$
where $\iota$ is {\em Opdam's involution}; to see this, differentiate with
respect to $x$ and then evaluate
at $x=1$ formula \cite[6.5]{Ma}; see also \cite[6.8]{Ma}.
\end{remark}
We  now restrict  to the  case where  $W$ is  the Weyl group of a connected
reductive  algebraic group $\bG$ over an  algebraic closure $\Fqbar$ of the
finite  field $\Fq$ with $q$ elements. We assume that $\bG$ is defined over
$\Fq$   and  denote  by   $F$  the  Frobenius   endomorphism  defining  the
corresponding  $\Fq$-structure. Let  $\bT$ be  an $F$-stable  maximal torus
lying  in  an  $F$-stable  Borel  subgroup  $\bB$. We may identify $W$ with
$N_\bG(\bT)$ and we assume $\bG$ split, which means that $F$ acts trivially
on  $W$. For $w\in  W$, let us  denote by $R_w$  the (virtual) character of
$\GF$  defined by the Deligne-Lusztig induction $R_{\bT_w}^\bG(\Id)$, where
$\bT_w$ is an $F$-stable maximal torus of type $w$ (with respect to $\bT$).
Here  $R_w$  is  a  $\Qlbar$-character,  for  some  prime number $\ell$ not
dividing  $q$, but we will consider it as a complex character by choosing a
suitable  embedding  $\Qlbar\hookrightarrow\BC$.  The  set  $\cE(\GF,1)$ of
constituents  of  the  various  $R_w$  is  called the set of {\em unipotent
characters} of $\GF$.

The character $R_1$ identifies to that of $\Ind_{\bB^F}^\GF\Id$, and, since
the  commuting algebra of  this representation is  the Hecke algebra $\cH$,
isomorphic to the group algebra of $W$, we have a decomposition of the form
$R_1=\sum_{\chi\in\Irr(W)}\chi(1)  U_\chi$ where  the $U_\chi$  are certain
characters of $\GF$ called the {\em principal series unipotent characters}.

The set $\cE(\GF,1)$ and the values $\scal{R_w}\rho\GF$ for
$\rho\in\cE(\GF,1)$  are  independent  of  $q$;  they provide an additional
combinatorial structure on $W$ which can actually be entirely determined by
the  Hecke  algebra.  In  the  case  where  $W$ is the symmetric group, the
characters $U_\chi$ exhaust the set $\cE(\GF,1)$ and
$\scal{R_w}{U_\chi}\GF=\chi(w)$,   so  Deligne-Lusztig  combinatorics  bring
nothing  new.  We  could  characterize  their  role in the current proof as
enabling  the use for other Weyl groups  of the same features which occur in
the proof in the symmetric group case.

By linearity we attach to any $a\in \BC W$ a class function $R_a$ on $\GF$,
given  if $a=\sum_{w\in W}a_w w$ by $R_a=\sum_w a_w R_w$. In the particular
case  where $a$ is the  idempotent $\frac 1{|W|} \sum_{w\in W}\chi(w\inv)w$
attached  to $\chi\in\Irr(W)$  we denote by $R_\chi$ the  corresponding class
function. Inverting we get $R_w=\sum_\chi\chi(w)R_\chi$. By
\cite[3.19.2]{Madison} we have
$\scal{R_\chi}{R_\psi}\GF=\delta_{\chi,\psi}$.  It follows that for any two
elements $a,b\in \BC W$ we have
$\scal{R_a}{R_b}\GF=\sum_{\chi\in\Irr(W)}\chi(a)\overline{\chi(b)}$.  Thus
formula (1) becomes $$|W|N_l=\scal{R_{S^l}}{R_c}.\eqno (1')$$

\begin{lemma} \label{hecke}
We have $\frac{\chi(S)}{\chi(1)}=|\cR|-a_\chi-A_\chi$, where $a_\chi$ 
(resp. $A_\chi$) is the valuation (resp. the degree) of the generic degree
of $\cH$ attached to $\chi$ (see for example \cite[\S 2.B]{BM}).
\end{lemma}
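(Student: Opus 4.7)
The plan is to reduce the statement to Lemma \ref{chi(S)} combined with a standard identity from the representation theory of Hecke algebras of Weyl groups. The first step is to verify that the hypothesis of Lemma \ref{chi(S)} holds automatically in the present setting: since $W$ is a Weyl group, every reflection has order $2$, so $|C_W(H)| = 2$ is constant over $H \in \cR^*$. Applying the previous lemma yields
\[
\chi(S) = |\cR|\chi(1) - N(\chi) - N(\chi^*),
\]
and dividing by $\chi(1)$ reduces Lemma \ref{hecke} to the identity
\[
a_\chi + A_\chi = \frac{N(\chi) + N(\chi^*)}{\chi(1)}. \qquad (\ast)
\]

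The second step, and the main content, is to establish $(\ast)$. This is a classical fact about the equal-parameter Hecke algebra of a Weyl group, developed in \cite[\S 2.B]{BM}: it follows from the palindromy $q^{a_\chi + A_\chi} D_\chi(q^{-1}) = D_\chi(q)$ of the generic degree, the analogous symmetry of the fake degree (which forces the exponents $e_i$ of the fake degree of $\chi$ to average to $(b_\chi + B_\chi)/2$, giving $N(\chi^*) = \chi(1)(b_\chi + B_\chi)/2$), and the equality of centers $a_\chi + A_\chi = b_\chi + B_\chi$ of these two palindromies. Since all characters of a Weyl group are rational one has $\chi = \chi^*$, and $(\ast)$ simplifies to the coincidence of the centers of the generic and fake degrees.

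The main obstacle is $(\ast)$; the preceding reduction is purely formal. A more self-contained path I would pursue, sidestepping the fake degree entirely, is to compute inside the generic Hecke algebra the scalar by which the central element $\sum_{r \in \cR} T_r$ acts on the irreducible representation indexed by $\chi$: this element specializes to $S \in \BC W$ at $q = 1$, and expanding its scalar action via the generic degree formula to first order in $q - 1$ should produce $\chi(S)/\chi(1) = |\cR| - a_\chi - A_\chi$ in one stroke.
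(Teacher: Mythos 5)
Your opening reduction is exactly the paper's proof: for a Weyl group every reflection has order $2$, so Lemma \ref{chi(S)} applies with $e=2$; all characters are rational, so $\chi=\chi^*$; and the statement becomes the identity $N(\chi)=\chi(1)\frac{a_\chi+A_\chi}{2}$, which the paper disposes of by citing \cite[formula 4.21]{BM}. Up to that point you and the paper coincide.

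The gap is in your proposed derivation of $(\ast)$: two of the three ingredients you invoke are false for Weyl groups in general. Writing $P_\chi(x)=\sum_i x^{e_i}$ for the fake degree, $P_\chi$ is \emph{not} always palindromic about $(b_\chi+B_\chi)/2$. The correct statement, due to Beynon and Lusztig, is $x^{a_\chi+A_\chi}P_\chi(x^{-1})=P_{\beta(\chi)}(x)$ for an involution $\beta$ of $\Irr(W)$ which is the identity except that it exchanges $\phi_{512,11}\leftrightarrow\phi_{512,12}$ in $E_7$ and two analogous pairs of characters of degree $4096$ in $E_8$. For these exceptional characters the multiset of exponents is not symmetric about its own midpoint, and your ``equality of centers'' $a_\chi+A_\chi=b_\chi+B_\chi$ also fails: in $E_7$ one has $|\cR|=63$ and $\phi_{512,12}=\phi_{512,11}\otimes\varepsilon$ (sign twist), so the family has $a_\chi+A_\chi=63$, while $b_\chi+B_\chi=11+51=62$ for one member and $12+52=64$ for the other. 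So your argument does not establish $(\ast)$ for these characters, even though $(\ast)$ remains true for them --- there it amounts to the extra fact that $N(\chi)=N(\beta(\chi))$, equivalently $\chi(S)=0$ for these sign-twisted pairs, which your two symmetries do not supply. This is precisely why the paper cites \cite[4.21]{BM} rather than rederiving it. Your alternative route, via the scalar by which $\sum_{r\in\cR}T_r$ acts on the generic Hecke algebra module, is actually closer in spirit to how such identities are proved in \cite{BM} (through the central character of the full twist), but as written it is a plan rather than an argument.
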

\begin{proof} This is just a translation of Lemma \ref{chi(S)} in terms of the
invariants coming from the Hecke algebra. In the case of Coxeter  groups 
in Lemma \ref{chi(S)} we have $e=2$ and  $\chi=\chi^*$.  Thus  Lemma
\ref{chi(S)} for Coxeter groups becomes $\chi(S)=\chi(1)|\cR|-2N(\chi)$. We
then conclude by \cite[formula 4.21]{BM} which states that in Coxeter groups
$N(\chi)=\chi(1)\frac{a_\chi+A_\chi}2$.
\end{proof}
\begin{lemma} \label{RSl} For $\rho\in\cE(\GF,1)$ we have
$$\scal{R_{S^l}}\rho\GF=\begin{cases}\chi(S^l)&\text{if $\rho=U_\chi$
for some $\chi\in\Irr(W)$}\\
0&\text{otherwise}\end{cases}.$$
\end{lemma}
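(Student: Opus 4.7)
The plan is to identify $R_{S^l}$ with $\sum_{\chi\in\Irr(W)}\chi(S^l)U_\chi$ as a class function on $\GF$; both cases of the lemma then fall out of the orthonormality of $\cE(\GF,1)$. Starting from $R_w=\sum_\chi\chi(w)R_\chi$ and extending by linearity to $S^l\in\BC W$, I first write $R_{S^l}=\sum_\chi\chi(S^l)R_\chi$. Since $S$ is central, Schur's lemma gives $\chi(S^l)=\chi(1)(\chi(S)/\chi(1))^l$, and Lemma \ref{hecke} then yields $\chi(S^l)=\chi(1)(|\cR|-a_\chi-A_\chi)^l$.

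The crucial input is Lusztig's theory of unipotent characters, from which I borrow two facts: (i) for each $\chi\in\Irr(W)$, the almost character $R_\chi$ lies in the $\BC$-span of the unipotent characters belonging to the Lusztig family $\cF(\chi)$ containing $U_\chi$; (ii) both $a_\chi$ and $A_\chi$ are constant on $\{\chi\in\Irr(W):U_\chi\in\cF\}$ for each family $\cF$---for $a_\chi$ this is Lusztig's definition of families, and for $A_\chi$ it follows from the identity $A_\chi=|\cR|-a_{\chi\otimes\varepsilon}$ together with the stability of the family partition under the sign twist $\chi\mapsto\chi\otimes\varepsilon$. Setting $c_\cF:=(|\cR|-a_\chi-A_\chi)^l$, well-defined for each family $\cF$, I obtain
$$R_{S^l}=\sum_\cF c_\cF\sum_{\chi\,:\,U_\chi\in\cF}\chi(1)R_\chi.$$

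To identify the inner sum I use the two expressions $R_1=\sum_\chi\chi(1)R_\chi$ (from $R_w=\sum_\chi\chi(w)R_\chi$ at $w=1$) and $R_1=\sum_\chi\chi(1)U_\chi$ (the principal series decomposition), and project their equality onto the span of the unipotent characters lying in a given family $\cF$: by (i) the left-hand side becomes $\sum_{\chi:U_\chi\in\cF}\chi(1)R_\chi$ (only those $R_\chi$ with $U_\chi\in\cF$ project nontrivially), while on the right only principal series characters appear and only those in $\cF$ survive, giving $\sum_{\chi:U_\chi\in\cF}\chi(1)U_\chi$. Substituting back, $R_{S^l}=\sum_\chi\chi(S^l)U_\chi$, and the lemma follows at once. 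The main obstacle I anticipate is not computational but one of citation: pinning down precise references in Lusztig's work for both (i) and the family-constancy of $A_\chi$ via the sign twist.
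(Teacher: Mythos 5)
Your proof is correct and follows essentially the same route as the paper's: both rest on the block structure of the Lusztig families (so that $R_\chi$ is supported on the unipotent characters of the family of $U_\chi$), the constancy of $a_\chi+A_\chi$ on families so that $\chi(S^l)=c_\cF^l\chi(1)$ there, and the comparison of $\sum_\chi\chi(1)R_\chi$ with the principal-series decomposition $R_1=\sum_\chi\chi(1)U_\chi$. The only differences are cosmetic (you identify $R_{S^l}$ as a class function before pairing with $\rho$, whereas the paper computes $\scal{R_{S^l}}\rho\GF$ directly, and you sketch the sign-twist argument for the constancy of $A_\chi$ where the paper simply cites Lusztig).
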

\begin{proof} 
The blocks of the matrix
$\{\scal{R_\chi}{\rho}\GF\}_{\chi\in\Irr(W),\rho\in\cE(\GF,1)}$ are called the
{\em Lusztig families}. They constitute thus a partition $\Fam$ of $\cE(\GF,1)$ 
such that for a ``family'' $\cF\in\Fam$  we have:
\begin{itemize}\item
If $U_\chi\in\cF$, $\rho\notin\cF$ then $\scal{R_\chi}{\rho}\GF=0$.
\end{itemize}
Given a family $\cF$, the invariants
$a_\chi$ and $A_\chi$ take a constant value on the $\chi\in\Irr(W)$ such that
$U_\chi\in\cF$ (see, for example \cite[4.23, 5.25 and 5.27]{Book}), 
thus $\chi(S)/\chi(1)$ takes a constant value that we will 
denote $c_\cF$ on a family $\cF$.  Thus, for $\rho\in\cF$ we have:
$$\begin{aligned}
 \scal{R_{S^l}}\rho\GF&=\scal{\sum_{\chi\in\Irr(W)}\chi(S^l)R_\chi}\rho\GF=
 \sum_{\{\chi\mid U_\chi\in\cF\}}\chi(S^l)\scal{R_\chi}\rho\GF\\
&=c_\cF^l\sum_{\{\chi\mid U_\chi\in\cF\}}\chi(1)\scal{R_\chi}\rho\GF
 =c_\cF^l\scal{\sum_{\chi\in\Irr(W)}\chi(1)R_\chi}\rho\GF\\
 &=\begin{cases}
0&\text{unless $\rho$ is a $U_\chi$}\\
c_\cF^l\chi(1)=\chi(S^l)&\text{if $\rho=U_\chi$}\end{cases}
\end{aligned}$$
\end{proof}
\begin{lemma}\label{Rc} If $c$ is a Coxeter element and $\chi\in\Irr(W)$
then $\scal{R_c}{U_\chi}\GF=0$ unless $\chi$ is
an exterior power of the reflection character $\Ref$ of $W$.
Moreover $\scal{R_c}{U_{\wedge^i\Ref}}\GF=(-1)^i$.
\end{lemma}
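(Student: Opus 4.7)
The plan is to invoke Lusztig's explicit description of the $\ell$-adic cohomology of the Deligne--Lusztig variety $X(c)$ attached to a Coxeter element, from \emph{Coxeter orbits and eigenspaces of Frobenius} (Invent.\ Math.\ 1976).

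First, by the Lefschetz trace formula, $R_c=\sum_i(-1)^i[H^i_c(X(c),\Qlbar)]$ as a virtual $\GF$-module. Lusztig's theorem then says that each $H^i_c(X(c),\Qlbar)$ decomposes multiplicity-freely into unipotent characters, each labelled by its eigenvalue of Frobenius, and that the principal-series constituents occurring in the total cohomology $\bigoplus_i H^i_c$ are precisely the $n+1$ characters $U_{\wedge^i\Ref}$ for $i=0,\ldots,n$, with $U_{\wedge^i\Ref}$ sitting in a single cohomological degree whose parity is $i$. The remaining constituents are outside the principal series (in particular they may be cuspidal), so they do not contribute to $\scal{R_c}{U_\chi}\GF$ for any $\chi\in\Irr(W)$. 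Taking the alternating sum immediately yields $\scal{R_c}{U_{\wedge^i\Ref}}\GF=(-1)^i$ and $\scal{R_c}{U_\chi}\GF=0$ for every other $\chi\in\Irr(W)$.

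Note that the purely ``internal'' approach, starting from $R_c=\sum_\chi\chi(c)R_\chi$ and trying to mimic the family-theoretic argument of Lemma \ref{RSl}, is not directly available here: the scalar $\chi(c)/\chi(1)$ is \emph{not} constant on Lusztig families (already in $B_2$ the three characters in the large family take the three distinct values $-1,-1,0$), so one cannot factor $\chi(c)$ out of a family sum. What saves the argument is instead the rigidity of the $\Phi_h$-Harish-Chandra series of the anisotropic Coxeter torus, together with the special role of the exterior powers of $\Ref$ in this series.

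The main obstacle is therefore to identify, inside Lusztig's theorem, the principal-series cohomology with the specific characters $U_{\wedge^i\Ref}$. The key input is that the fake degree of $\wedge^i\Ref$ equals $e_i(t^{m_1},\ldots,t^{m_n})$, with $m_1,\ldots,m_n$ the exponents of $W$; this determines its $a$- and $A$-invariants (already $a+A=ih$ distinguishes the $\wedge^i\Ref$ among themselves, since they lie in pairwise distinct families) and pins each $\wedge^i\Ref$ down inside its Lusztig family. Springer regularity of $c$---which has eigenvalues $\zeta_h^{m_j}$ on the reflection representation, for $\zeta_h=e^{2i\pi/h}$---then matches the Frobenius eigenvalues on $H^*_c(X(c))$ computed by Lusztig to precisely those $a,A$ invariants, completing the identification. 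Once this matching is granted, the remainder of the argument is formal.
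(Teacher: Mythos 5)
Your proof is correct and follows essentially the same route as the paper: both reduce the statement to Lusztig's description in \emph{Coxeter orbits and eigenspaces of Frobenius} of the cohomology of the Coxeter Deligne--Lusztig variety, namely that each $H^i_c(\bX_c,\Qlbar)$ for $i=n,\ldots,2n$ contains exactly one principal-series constituent, with multiplicity one, and that it is $U_{\wedge^{2n-i}\Ref}$. The only difference is cosmetic: the paper obtains the identification of these constituents with the exterior powers of $\Ref$ directly from Lusztig's Remark 7.8 (via the exterior powers of the reflection representation of the Hecke algebra), rather than through your proposed matching of fake degrees and Frobenius eigenvalues.
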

\begin{proof}
Let  $\bX_c$ be  the Deligne-Lusztig  variety defining  $R_c$, so  that for
$g\in\GF$  we have  $R_c(g)=\sum_i(-1)^i\Trace(g\mid H_c^i(\bX_c,\Qlbar))$.
By  \cite[6.7 (ii)]{Lusztig}  applied with  $I=\emptyset$ and $R_0=\Id$, we
know   that   in   each   cohomology   group   $H_c^i(\bX_c,\Qlbar)$   with
$i=n,n+1,\ldots,2n$,   there  is  exactly  one  irreducible  representation
$U_{\chi_i^W}$  in the principal series and that it has multiplicity 1. The
fact   that  $\chi_i^W=\wedge^{2n-i}\Ref$   is  \cite[remark  7.8]{Lusztig}
applied  with $I=\emptyset$ and  $R_0=\Id$. In this  case the Hecke algebra
$\cH$ of loc. cit. is the same as our algebra $\cH$ and the $i$-th power of
the  reflection  representation  of  $\cH$  defines precisely the character
$U_{\wedge^i\Ref}$.
\end{proof}
Using Lemmas \ref{RSl} and \ref{Rc} to evaluate $(1')$ we get:
$$|W|N_l=\sum_{i=0}^n(-1)^i\wedge^i\Ref(S^l).\eqno(1'')$$
\begin{lemma}For any well-generated irreducible complex 
reflection group, formula $(1'')$ is equivalent to the Chapuy-Stump formula.
\end{lemma}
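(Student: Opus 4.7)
The plan is to reduce both identities to the same explicit closed-form expression for $|W|N_l$ and then verify that expression by a direct eigenvalue computation of $\wedge^i\Ref(S)$. Expanding the right-hand side of the Chapuy-Stump formula by the binomial theorem gives
$$(e^{t|\cR|/n}-e^{-t|\cR^*|/n})^n = \sum_{i=0}^n (-1)^i\binom{n}{i}\exp\Bigl(t\cdot\tfrac{(n-i)|\cR|-i|\cR^*|}{n}\Bigr),$$
so reading off the coefficient of $t^l/l!$ yields
$$|W|\,N_l = \sum_{i=0}^n (-1)^i\binom{n}{i}\Bigl(\tfrac{(n-i)|\cR|-i|\cR^*|}{n}\Bigr)^{l}.$$

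Since $S$ lies in the center of $\BC W$, it acts on the irreducible representation $\wedge^i\Ref$ as a scalar $c_i$, so $\wedge^i\Ref(S^l) = \binom{n}{i}c_i^l$ and formula $(1'')$ reads $|W|N_l = \sum_{i=0}^n (-1)^i\binom{n}{i}c_i^l$. The two sides agree term-by-term for every $l\ge 0$ if and only if $c_i = \bigl((n-i)|\cR|-i|\cR^*|\bigr)/n$ for each $i\in\{0,\ldots,n\}$. Using the elementary identities $\binom{n}{i}(n-i)/n=\binom{n-1}{i}$ and $\binom{n}{i}i/n=\binom{n-1}{i-1}$, this reduces the whole problem to the single identity
$$\wedge^i\Ref(S) = \binom{n-1}{i}|\cR|-\binom{n-1}{i-1}|\cR^*|. \qquad(\ast)$$

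The remaining task is to establish $(\ast)$ by direct count. A reflection $s\in\cR$ has eigenvalues $1,\ldots,1,\zeta_s$ on $\Ref$ with one non-trivial eigenvalue $\zeta_s$, so on $\wedge^i\Ref$ it decomposes into a $1$-eigenspace of dimension $\binom{n-1}{i}$ (basis elements not involving the $\zeta_s$-direction) and a $\zeta_s$-eigenspace of dimension $\binom{n-1}{i-1}$, giving $\wedge^i\Ref(s) = \binom{n-1}{i}+\binom{n-1}{i-1}\zeta_s$. Summing over $s\in\cR$ and grouping reflections by hyperplane $H\in\cR^*$ produces
$$\wedge^i\Ref(S) = \binom{n-1}{i}|\cR| + \binom{n-1}{i-1}\sum_{H\in\cR^*}\sum_{k=1}^{e_H-1}\zeta_H^k,$$
and since for each $H$ the inner sum is the sum of all nontrivial $e_H$-th roots of unity, it equals $-1$, yielding $(\ast)$. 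No deep input is required, and I do not foresee a genuine obstacle: the content of the equivalence lies entirely in the eigenvalue bookkeeping on exterior powers, with the boundary cases $i=0$ (trivial character, $\wedge^0\Ref(S)=|\cR|$) and $i=n$ (determinant character, $\det(S)=-|\cR^*|$) checked separately using the conventions $\binom{n-1}{-1}=\binom{n-1}{n}=0$.
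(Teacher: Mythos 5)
Your proof is correct and follows essentially the same route as the paper: both arguments use the irreducibility of $\wedge^i\Ref$ (so that the central element $S$ acts by a scalar), reduce to the identity $\wedge^i\Ref(S)=\binom{n-1}{i}|\cR|-\binom{n-1}{i-1}|\cR^*|$, and prove it by the same eigenvalue count on exterior powers, merely organizing the sum per reflection rather than per hyperplane. The only point you assert without justification is the irreducibility of $\wedge^i\Ref$, for which the paper cites Steinberg's theorem.
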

\begin{proof} Since the representations $\wedge^i\Ref$ are irreducible by a
result  of  Steinberg  (see  \cite[\S  2  ex. 3]{St}), formula $(1'')$ can be
written
$$|W|N_l=\sum_{i=0}^n(-1)^i(\wedge^i\Ref(S)/\wedge^i\Ref(1))^l\wedge^i\Ref(1).$$
Let   us   pick   a   reflecting   hyperplane  $H$,  let  $e=|C_W(H)|$  and
$\zeta=\exp(2i\pi/e)$  and let $s$ be  a reflection with eigenvalue $\zeta$
which generates $C_W(H)$. Let us compute
$\sum_{j=1}^{j=e-1}\wedge^i\Ref(s^j)$.    We    may    choose    a    basis
$e_1,\ldots,e_n$  of $\BC^n$ such that  $e_i\in H$ for $i=1,\ldots,n-1$ and
$s e_n=\zeta e_n$. A basis of $\wedge^i\BC^n$ consists of
$e_I=e_{a_1}\wedge\ldots\wedge  e_{a_i}$ where  $I=\{a_1,\ldots,a_i\}$ with
$a_1<\ldots<a_i$  runs over all subsets  of $\{1,\ldots,n\}$ of cardinality
$i$.   We   have   $\sum_{j=1}^{e-1}s^j(e_I)=\begin{cases}  (e-1)e_I&\text{if
$n\notin I$}\\-e_I&\text{otherwise}\end{cases}$ whence
$\sum_{j=1}^{j=e-1}\wedge^i\Ref(s^j)=(e-1){n\choose  i}-e{n-1\choose  i-1}$
and $\wedge^i\Ref(S)=|\cR|({n\choose i}-{n-1\choose
i-1})-|\cR^*|{n-1\choose i-1}$. We finally get
$\wedge^i\Ref(S)/\wedge^i\Ref(1)=|\cR|(1-\frac   in)-|\cR^*|\frac  in$  and
$$|W|N_l=\sum_{i=0}^n(-1)^i(|\cR|(1-\frac  in)-|\cR^*|\frac in)^l {n\choose
i},$$  which  is  exactly  what  one  gets  when expanding the Chapuy-Stump
formula.
\end{proof}

\noindent{\sl Note.}
I  believe that Lemmas \ref{RSl} and \ref{Rc} hold for any Spetsial complex
reflection  group,  with  an  appropriate  definition  of  a  formal set of
unipotent  characters  (see  \cite{BMM}).  I  checked  it by computer 
for  the  primitive  irreducible Spetsial complex reflection
groups using \cite{chevie}.

For  Spetsial groups whose  reflections have  order 2  the proof  of Lemma \ref{RSl}
remains  formally  valid  since  Lemma  \ref{hecke}  remains  true for such
groups,  thus also the fact that $\chi(S)/\chi(1)$ is constant on families.
When reflections do not all have order 2,
there are examples where $\chi(S)/\chi(1)$ is not constant
on families so the proof   of Lemma \ref{RSl} has to change.

\end{document}